\documentclass{article}
\usepackage[cp1251]{inputenc}
\usepackage{amssymb, amsthm, amsmath}
\usepackage{graphicx}
\usepackage{float}
\usepackage[usenames]{color}
\usepackage{colortbl}
\usepackage{subfigure}
\newtheorem{theorem}{Theorem}
\newtheorem{ex}{Example}
\newtheorem{statement}{Statement}

\begin{document} 
\date{}
\title{\textbf{Infinitely many graph manifolds with unique geometrical piece that admit arbitrarily many Anosov flows}}
\author{O. Pochinka, V. Shmukler}
\maketitle

\begin{abstract} Anosov flows  have a long and rich history, firstly motivated by the 
study of geodesic flows in negative curvature surface by Anosov and Sinai. Not every closed manifold admits an Anosov flow for well-known reasons: the fundamental group of a 3-manifold \(M\) admitting an Anosov flow must have exponential growth, and \(M\) must be universally covered by \(\mathbb{R}^{3}\). Nevertheless, there are sufficient mechanisms for constructing distinct Anosov flows on admissible 3-manifolds, such as Dehn-Goodman-Fried surgery or playing with hyperbolic building blocks. A central problem in the field has been to determine the number of Anosov flows that can be supported by a single manifold. The question of whether there exists an infinite set of pairwise non-equivalent 
Anosov flows on a 3-manifold remains open to this day. However, there are several 
papers proving the existence of a manifold $M_n$ that admits $n$ pairwise inequivalent 
Anosov flows for any natural number $n$. In all known examples, the manifolds $M_n$ 
are composed of several geometric pieces. In the present paper, we prove the 
existence of a countable number of graph manifolds $M_{k,n}$, $k \in \mathbb{N}$ 
with a single geometric piece, each of which admits $n$ pairwise non-equivalent 
transitive Anosov flows. All previously known constructions of different flows 
on the same graph manifold were based on gluing geodesic flows. The nature of 
the flows constructed in this paper is completely different; they are constructed 
from a single hyperbolic plug, which is a suspension over a Morse-Smale 
diffeomorphism on a surface. 
\end{abstract}

\section{Introduction}
Let $M$ be a closed smooth manifold. We say that a smooth vector field $X$ in $M$ is {\it an Anosov vector field} if denoting $\phi^t$ the flow generated by $X$ ({\it Anosov flow}) it holds that there is a $D\phi^t$-invariant continuous splitting $$TM = E^s\oplus E^c\oplus E^u,$$ where \(E^{c}\) is the 1-dimensional neutral direction tangent to the flow and there is $T > 0$ so that for vectors $v^s\in E^s$ and $v^u\in E^u$ one has that
\begin{equation}\label{hyp}
D\phi^T(v^s)||\leqslant\frac12\,||v^s||,\,\ ||D\phi^T(v^u)||\geqslant 2\,||v^u||.
\end{equation}

Among many interesting properties (see, for example, \cite{KH}), Anosov vector fields are structural stable, Anosov flows always have infinitely
many periodic trajectories, all of them saddle type hyperbolic. The Stable Manifold Theorem asserts that the plane bundles $E^s\oplus E^c$ and $E^u\oplus E^c$
are tangent to a pair of transverse
codimension one foliations $F^s$ and $F^u$ on $M$, that intersects along the $\phi^t$-orbits. We call them {\it stable and  unstable foliations}. Every leaf of these foliations is homeomorphic to either a cylinder or a Mobius band (in case the leaf contains a periodic orbit), or to a plane. Also there are {\it strong stable and unstable} foliations $F^{ss}$ and $F^{uu}$ tangent to $E^s$ and $E^u$, accordingly.

Two important examples, which were those which motivated the definition are the
following. 

\begin{ex}[Suspensions] Consider a linear map $A\in SL(d,\mathbb Z)$ so that all its eigenvalues have modulus different from one. It induces a diffeomorphism $f_A :\mathbb T^d\to\mathbb T^d$ where we
identify $\mathbb T^d\cong\mathbb R^d/\mathbb Z^d$ ({\it Anosov diffeomorphism}). Consider the manifold $M_A = \mathbb T^d\times\mathbb R/_\sim$, where $(x, s)\sim (f^k_A(x), s-k)$
for every $x\in\mathbb T^d,\, t\in\mathbb R,\, k\in\mathbb Z$. The flow $\phi^t: M_A\to M_A$ given by $\phi^t([(x, s)]) = [(x, s+t)]$ is Anosov. 
\end{ex}

\begin{ex}[Geodesic flows] Let $(S, g)$ be a smooth closed 2-dimensional Riemannian manifold with constant negative curvature.
The phase space of the dynamics is the unit tangent bundle $T^1S$, defined as 
$$T^1S = \{ (x, v) \in TS \mid g_x(v, v) = 1 \}.$$ 
For any initial state $(x, v)$ in 3-manifold $T^1S$, the theorem of existence and uniqueness for ordinary differential equations ensures a unique, globally defined, constant-speed geodesic $\gamma=\gamma_{(x,v)}: \mathbb{R} \to S$ such that
$$\gamma(0) = x \quad \text{and} \quad \dot{\gamma}(0) = v.$$ 
The geodesic flow is a one-parameter group of diffeomorphisms $\phi^t: T^1S \to T^1S$ mapping each vector forward along its corresponding geodesic
$$\phi^t(x,v) = \left( \gamma(t), \, \dot{\gamma}(t) \right) \in T^1S.$$ 
The base projection $\pi: T^1S \to S$ maps the orbit $\phi^t(x,v)$ back onto the  curve $\gamma(t)$. The vector $\dot{\gamma}(t)$ is obtained via parallel transport along the curve $\gamma(t)$.
\end{ex}

Not every closed manifold admits an Anosov flow. So, Margulis \cite{Marg} proved the following beautiful result which provided restrictions on the topology of a manifold that carries an Anosov flow.

\begin{statement}[\cite{Marg}]\label{exp} A 3-manifold admitting an Anosov
flow has the fundamental group with  exponential growth.
\end{statement}

For instance, the 3-dimensional sphere, lens spaces, the three dimensional torus, $\mathbb S^2\times\mathbb S^1$ or nilmanifolds cannot admit Anosov flows. 

The following result excludes other classes of manifolds, such as connected sums.

\begin{statement}[\cite{PT}, \cite{CC}, \cite{Ca}]\label{con} A 3-manifold admitting an Anosov flow must be irreducible and moreover, its universal cover is homeomorphic to $\mathbb R^3$.
\end{statement} 

Ghys \cite{Gh} provides a strong classification of Anosov
flows in circle bundles (the completion of this classification is contained in the recent \cite{BaFe3}) and states the following.

\begin{statement}[\cite{Gh}, \cite{BaFe3}]\label{T1S}
If a circle bundle over a compact surface $S$ admits an Anosov flow then  it is a finite cover of the unit tangent bundle of $S$.
\end{statement}

\begin{statement}[\cite{Barb2}]\label{Sei} Any Anosov flow on a closed Seifert fibered space is topologically equivalent to a finite lift of a geodesic flow on a hyperbolic surface.
\end{statement}

For a toroidal 3-manifold $M$, the collection of  JSJ-decomposition torus is in invariant of Anosov flow on $M$ in a sense of the following result.  

\begin{statement}[\cite{BaJ}, \cite{Br}]\label{br} Let $Z$ be an Anosov vector field on a closed three-manifold
$M$ and let $T$ and $T'$ be some tori embedded in $M$ and transverse to $Z$. If $T$ is homotopic to $T'$, then $T$ is isotopic to $T'$ along the orbits of $Z$.
\end{statement}

A central problem in the field has been to determine the number of Anosov flows
that can be supported by a single manifold.

Works of Plante, Ghys and Barbot \cite{Pla81}, \cite{Gh}, \cite{Barb2}  give finiteness results on certain geometric (solvable or Seifert fibered) 3-manifolds. By contrast, much more recent work shows that, for any given $n$, one
can find examples of closed 3-manifolds (either hyperbolic or with non-trivial JSJ-decomposition) that admit at least $n$  inequivalent transitive Anosov flows.

The first explicit examples of manifolds supporting multiple Anosov flows were
constructed in \cite{Barb2}, where Barbot constructs a family of graph manifolds that each
support two distinct Anosov flows. The surgery techniques of Goodman \cite{Goo} may
also produce two distinct Anosov flows on a manifold if the periodic orbit used for
the Dehn surgery admits two distinct purely cosmetic surgery slopes. More recently,
Bonatti, Beguin and Yu proved a general theorem allowing them to glue many pieces
with transverse toral boundaries \cite{BBY}. They use this new technique to construct, for each $n\in\mathbb N$, a non-geometric three-manifold $M_n$ consisting of two hyperbolic pieces and one Seifert fibered piece which supports at least $n$  distinct Anosov flows. Similar result was recently obtained by Bowden and Mann using an analysis of the rigidity properties of certain fundamental group actions [7], but their manifolds $M_n$ are hyperbolic. For each $n\in\mathbb N$ Clay and Pinsky \cite{ClPi} constructed a graph manifold which supports at least $n$ different Anosov flows. Their manifolds have only two pieces, each piece being the exterior of a trefoil with a geodesic flow. 

In the present paper, we prove the following result.

\begin{theorem}\label{tigr} For every $n\in\mathbb N$ there are  countable many graph manifolds $M_{k,n}$, $k \in \mathbb{N}$ 
with a single geometric piece, each of which admits $n$ pairwise inequivalent 
transitive Anosov flows.
\end{theorem}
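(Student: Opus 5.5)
We construct the flows from a single hyperbolic plug, following the Béguin--Bonatti--Yu gluing theorem \cite{BBY}. Start with a Morse--Smale diffeomorphism $f$ of a closed orientable surface $\Sigma$ whose non-wandering set consists of a finite number of hyperbolic periodic orbits. A convenient choice is a gradient-like map with sources, saddles and sinks, twisted by a periodic permutation whose period is tied to $k$. Remove small invariant neighbourhoods of the sources and the sinks. The suspension of $f$ restricted to what remains is a compact $3$-manifold $U_k$ with a flow $X_k$.

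We then modify $X_k$ near the removed orbits, in the spirit of a DA-type blow-up, so that $(U_k,X_k)$ becomes a hyperbolic plug. Its boundary consists of tori: entrance tori coming from the removed sources and exit tori coming from the removed sinks. By construction $U_k$ is a Seifert fibered space, namely the mapping torus of a periodic map of a punctured surface, so it is a single geometric piece. The maximal invariant set is the suspension of the saddle set together with their connections; it should be filtrating and, after the modification, saddle-type hyperbolic. On each boundary torus the stable (respectively unstable) lamination is made of finitely many closed leaves, and we arrange the slopes to be transverse to the Seifert fiber.

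Next we glue the exit tori to the entrance tori by diffeomorphisms $\psi$ that map the exit unstable lamination transversally to the entrance stable lamination. The result of \cite{BBY} then gives an Anosov flow on the closed manifold $M_\psi$, up to isotoping $\psi$ within its strongly transverse class. Transitivity follows from the BBY criterion once the graph of the plug's basic pieces is strongly connected, which we ensure by choosing the combinatorics of $f$ so that every saddle connects to every boundary component.

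To get $n$ flows on one manifold, we use gluings $\psi_j=\psi_0\circ D^{j}$, $j=1,\dots,n$, where $D$ is a power of a Dehn twist along the Seifert fiber slope. Each $\psi_j$ preserves the fiber slope on the torus, so all the $M_{\psi_j}$ are homeomorphic to one graph manifold $M_{k,n}$. The manifold is a graph manifold with a single Seifert piece glued to itself along tori. The fibers do not match across the tori, so these tori are JSJ tori and the piece is unique. Transversality of the laminations survives once the twist power is large, because twisting by the fiber direction moves the closed leaves toward the fiber slope while keeping them transverse. Varying $k$ changes the base orbifold or the Euler number, and this yields countably many distinct $M_{k,n}$, distinguished by their JSJ data.

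The main obstacle is showing that the $n$ flows are pairwise inequivalent. By Statement \ref{br}, any orbit equivalence can be isotoped to preserve the transverse tori, and hence to map the plug to itself. Inequivalence then reduces to comparing, on each JSJ torus, the slopes of the closed leaves of the induced stable and unstable laminations relative to the fiber. A homeomorphism of $M_{k,n}$ preserving the JSJ structure acts on these slopes through a finite group, up to fiber twists that must be compatible on both sides. We expect the number of Dehn twists separating $\psi_i$ from $\psi_j$ to appear as a topological invariant, namely the intersection number of the lamination slope with the fiber on the other side. Controlling the mapping class group of $M_{k,n}$ is therefore the key step. If this does not close the argument, a fallback is to distinguish the flows by counting periodic orbits freely homotopic to a fiber.
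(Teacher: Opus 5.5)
Your route differs from the paper's. You fix one Seifert plug and vary the gluing map by fiber twists. The paper fixes the gluing and varies the Morse--Smale map itself ($F_{p,q}$, $q=0,\dots,n$) inside a plug $V$ whose topology depends only on $(p,k)$, and then reads $q$ off the traces on the JSJ tori. As written, however, your construction does not meet the hypotheses you invoke, and the inequivalence step is missing.

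\textbf{The plug does not satisfy Statement~\ref{mai}.} That statement requires filling MS-laminations. A lamination made of finitely many closed leaves is not filling: its complementary regions are annuli bounded by compact leaves, not by two non-compact leaves asymptotic at both ends. Non-compact leaves come only from transverse heteroclinic intersections between saddles.

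In the paper these intersections are produced by the $k$-th Dehn twist $D_k$ on the orbit spaces in the connected sum of $F_{p,0}$ and $F_{p,q}$. The resulting twisted neck tori $T_j$ are exactly the JSJ tori. So the plug there is not a single Seifert piece; it is the $\varphi$-gluing, whose tori are not JSJ, that fuses the two halves into one piece.

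You give no Seifert plug with filling laminations. The ``DA-type modification'' is neither needed nor explained. You also do not check strong transversality after twisting, nor combinatorial transitivity in the sense of Statement~\ref{tra}.

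\textbf{The homeomorphism claim is unjustified.} ``$\psi_j$ preserves the fiber slope, so the $M_{\psi_j}$ are homeomorphic'' does not follow. A fiber twist on one boundary torus extends over the Seifert piece only together with compensating twists on other boundary tori: for $(x,\theta)\mapsto(x,\theta+\lambda(x))$ on $F\times S^1$, the boundary degrees sum to zero. Without compensation the manifold generally changes. Already for $F$ an annulus, gluing by $\psi_0D^j$ gives the torus bundle with monodromy $AD^j$, where $A$ is the matrix of $\psi_0$. Its trace shifts by $jb$, with $b\neq0$ the intersection number of the two fiber slopes.

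\textbf{The decisive gap is inequivalence, which you only conjecture.} Your candidate invariant is the intersection number of the closed unstable leaves with the fiber on the entrance side. It varies with $j$ only if those leaves are not parallel to the fiber on $T^-$.

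The closed leaves are traces of saddle separatrices and are freely homotopic in the plug to saddle orbits. In the natural situation they run along the sink or source orbit, i.e.\ along the fiber. This is exactly what happens in the paper, where they are the curves of class $[t]$ in $\hat V_{\omega_j}$, resp.\ $\hat V_{\tilde\alpha_j}$. Then $D$ fixes them, and every slope datum on the JSJ torus is independent of $j$. You assert that the slopes can be arranged transverse to the fiber but give no mechanism.

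What is missing is an invariant coming from the plug's own dynamics that survives Statement~\ref{br}. The paper uses that the traces of saddle-orbit manifolds on each $T_j$ are preserved by an equivalence. The trace of $\tilde\sigma_{j+q}$ appears on $T_j$ together with those of $\tilde\sigma_j$ and $\tilde\sigma_{j+1}$, and this recovers $q$.
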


All previously known constructions of different flows on the same graph manifold were based on gluing geodesic flows. The nature of the flows constructed in this paper is completely different; they are created from a single hyperbolic plug, which is a suspension over a Morse-Smale 
diffeomorphism on a surface. 

{\it Acknowledgement.} This work is an output of a research project (HSE-BR-2025-84) implemented as part of the Basic Research Program at HSE University.

\section{Hyperbolic plugs}
In this section, we briefly outline the main ideas of the Beguin, Bonatti, and  Yu paper \cite{BBY}, illustrating the text with elementary examples. 

A {\it plug} is a pair $(V,X)$ where $V$ is a compact 3–manifold with boundary and $X$ is a vector field on $V$, transverse to  $\partial V$ (in particular, $X$ is assumed to be non-singular on $V$). Given such a plug $(V,X)$, we can decompose $\partial V$ as the disjoint union of an entrance boundary $T^{+}_X$ (the part of $\partial V$, where $X$ is pointing
inwards) and an exit boundary $T^{-}_X$ (the part of $\partial V$ where $X$ is pointing outwards).
The plug $(V,X)$ is called an {\it attracting plug} if $T^{-}_X=\varnothing$,  a {\it repelling plug}  if $T^{+}_X=\varnothing$ and {\it saddle plug}, otherwise. The plug $(V,X)$ is called a {\it hyperbolic plug} if  the maximal invariant set $\Lambda=\bigcap\limits_{t\in\mathbb R} X^t(U)$ of generated $X$ flow $X^t$,  forms a hyperbolic set with one-dimensional strong stable and unstable bundles. 

If $(V,X)$ is a hyperbolic plug, the stable lamination $W^s_\Lambda$ (resp. the unstable lamination $W^u_\Lambda$)  intersects transversally $T^+_X$ (resp. $T^-_X$) and is disjoint from $T^-_X$ (resp. $T^+_X$). By transversality, $\mathcal L^s_X=W^s_\Lambda\cap T^+_X$ and $\mathcal L^u_X=W^u_\Lambda\cap T^-_X$
are one-dimensional laminations. 

The laminations $\mathcal L^s_
X$ and $\mathcal L^u_X$ are {\it  Morse-Smale laminations (MS-laminations)} \cite[Proposition 3.8]{BBY}, that is:
\begin{itemize}
\item(i) they contain finitely many compact leaves;
\item(ii) every half non-compact leaf is asymptotic to a compact leaf;
\item(iii) each compact leaf may be oriented such that its holonomy is a contraction, (this orientation is called {\it the contracting orientation}).
\end{itemize} 

\begin{ex}\label{net} For example, let as consider a gradient-like diffeomorphism $F_{1,0} : \mathbb{T}^2 \to \mathbb{T}^2$ (its phase 
portrait is shown in Figure \ref{0}).
\begin{figure}[htbp]
	\centering
	\includegraphics[width=\linewidth]{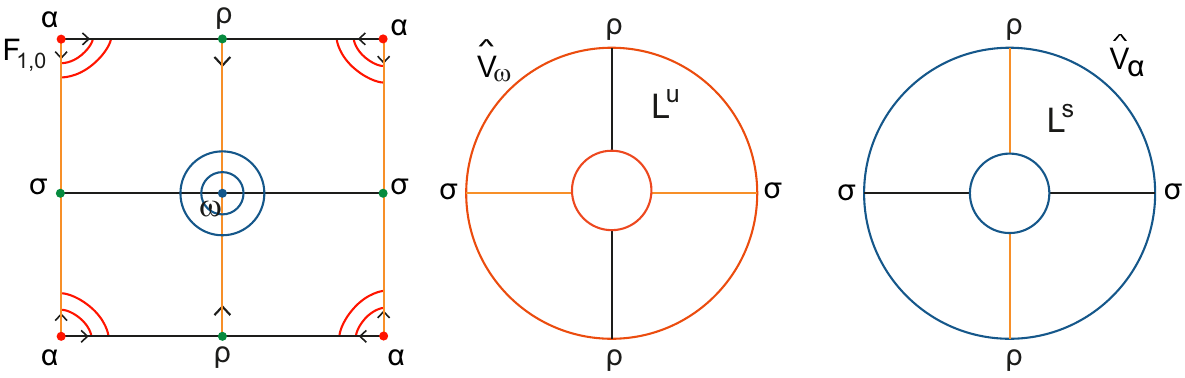}
	\caption{Diffeomorphisms $F_{1,0}$}
	\label{0}
\end{figure}
The non-wandering set of the diffeomorphism $F_{1,0}$ consists of one 
sink $\omega$, one source $\alpha$, and two saddles $\sigma,\rho$. The orbit space $\hat V_{\alpha}$ of the source basin $V_{\alpha}=W^u_{\alpha}\setminus\alpha$ 
is homeomorphic to a two-dimensional torus, and the stable saddle 
separatrices $L^s$ form a lamination in it consisting of four pairwise 
disjoint knots. The lamination $L^u$, which is the projection of the 
unstable saddle separatrices into the orbit space $\hat V_{\omega}$ 
of the sink basin $V_{\omega}=W^s_{\omega}\setminus\omega$, looks completely analogous. The hyperbolic plug $(V,X)$ is obtained from the suspension over the 
diffeomorphism $F_{1,0}$ by removing the neighborhoods of the sink 
and source periodic orbits. The plug is a saddle and its set $\Lambda$ consists of two saddle periodic orbits corresponding the saddles $\sigma,\rho$. Moreover, there are homeomorphisms $h_+:\hat V_{\alpha}\to T^+_X,\,h_-:\hat V_{\omega}\to T^-_X$ such that $h_+(L^s)=\mathcal L^s_X,\,h_-(L^u)=\mathcal L^u_X$. Thus, $\mathcal L^s_
X,\,\mathcal L^u_X$ are MS-laminations without non-compact leaves.  
\end{ex} 

\begin{statement}[\cite{BBY}, Proposition 1.1]\label{glu} Let $(V,X)$ and $(U,Y)$ be two hyperbolic plugs. Let $\tilde T^-_X$ be a union of connected components of $T^-_X$ and $\tilde T^+_Y$ be a union of connected components of $T^+_Y$. Assume that there exists a diffeomorphism $\varphi:\tilde T^-_X\to\tilde T^+_Y$ such that $\varphi(\mathcal L^u_X)$ is transverse to $\mathcal L^s_Y$. Let $Z$ be the vector field induced by $X$ and $Y$ on the
manifold $W=V\cup_\varphi U$. Then $(W,Z)$  is a hyperbolic plug.
\end{statement}

If $(V,X)$ is a hyperbolic plug such that $V$ is embedded in a closed three-dimensional
manifold $M$ and $X$ is the restriction of an Anosov vector field $\bar X$ on $M$, then the
stable (resp. unstable) lamination of the maximal invariant set of $V$ is embedded in
the stable (resp. unstable) foliation of the Anosov vector field $\bar X$ . This leads to some restrictions on the entrance and exit laminations of $V$, and motivates the following definition.

A lamination $\mathcal L$ on a compact surface $S$ is {\it a filling MS-lamination} if it satisfies
properties $(i), (ii), (iii)$ above and if the accessible boundary of every connected component $C$ of $S\setminus\mathcal L$  consists of two distinct noncompact leaves $L_1,L_2$ which are asymptotic to each other at both ends. Since any filling MS-lamination can be embedded in a $C^{0,1}$-foliation, then $S$ is either a torus or a Klein bottle. 

A hyperbolic plug $(V,X)$ has a filling MS-lamination $\mathcal L^s_X$ if
and only if this is also the case for the $\mathcal L^u_X$ \cite[Lemma 3.21]{BBY}.

The laminations of the plug from Example \ref{net} are not filling. To get a plug with the filling laminations let us consider the following example.

\begin{ex}\label{est} Let $k\in\mathbb N$ and let $F_{1,0,k}$ be the connected sum of two copies of diffeomorphism $F_{1,0}$  obtained by identifying $V_{\omega}$ of the first copy with  $V_{\alpha}$ of the second copy using the $k$-th Dehn twist $D_k:\hat V_{\omega}\to\hat V_{\alpha}$ on the corresponding orbit spaces 
(see Figure \ref{00}).
\begin{figure}[htbp]
	\centering
	\includegraphics[width=0.8\linewidth]{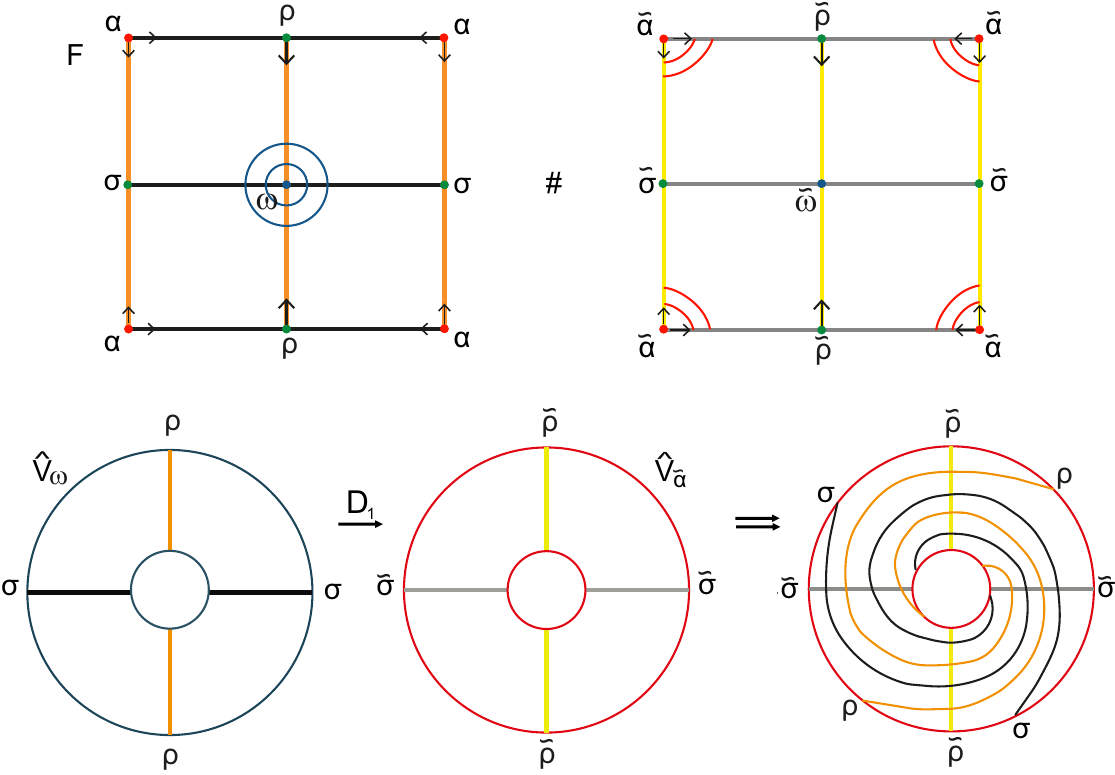}
	\caption{Diffeomorphisms $F_{1,0}\#F_{1,0}$}
	\label{00}
\end{figure} 
Thus, the diffeomorphism $F_{1,0,k}$ is a Morse-Smale diffeomorphism on an 
orientable surface of genus $2$ (see Figure \ref{S2}).
Its non-wandering set consists of one  
sink $\omega$, one sources $\alpha$, and fore  saddles, whose invariant manifolds intersect along $4k$ heteroclinic orbits. The projection of stable $L^s$ and unstable $L^u$ saddle separatrices to $\hat V_{\alpha}$ and  $\hat V_{\omega}$ accordingly is represented on Figure \ref{S2}.
\begin{figure}[htbp]
	\centering
	\includegraphics[width=0.8\linewidth]{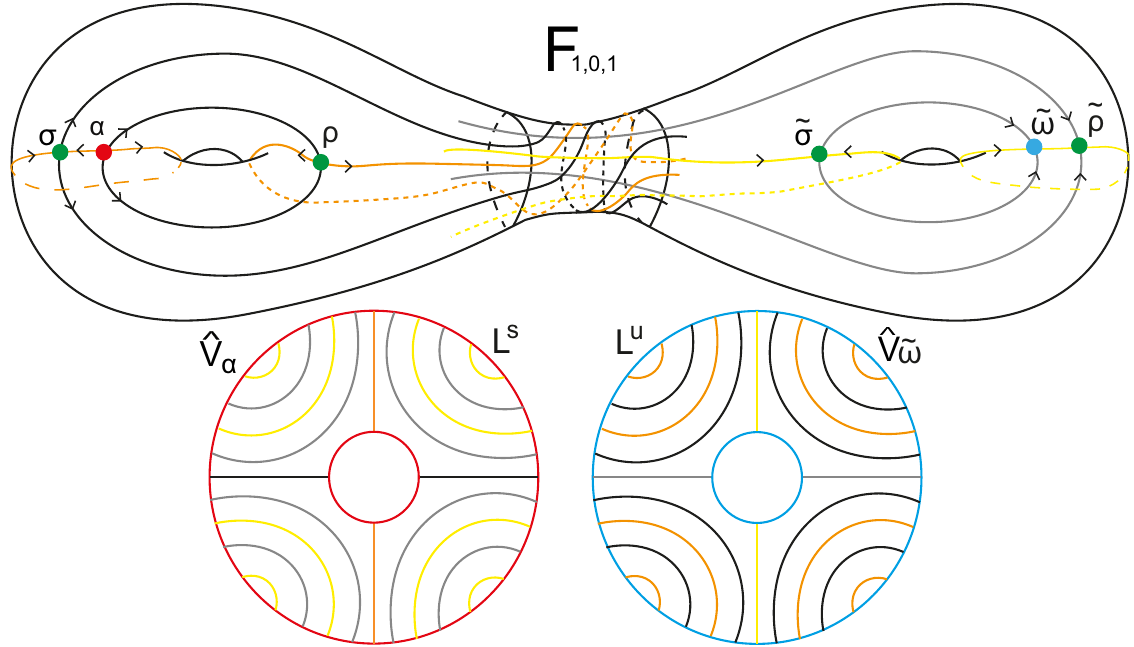}
	\caption{Diffeomorphism $F_{1,0,1}$}
	\label{S2}
\end{figure}
 The hyperbolic plug $(V,X)$ is obtained from the suspension over the 
diffeomorphism $F_{1,0,1}$ by removing the neighborhoods of the sink 
and source periodic orbits. The plug is a saddle and its set $\Lambda$ is a one  basic set consisting of fore saddle periodic orbits and $4k$ heteroclinic orbits. Moreover, there are homeomorphisms $h_+:\hat V_{\alpha}\to T^+_X,\,h_-:\hat V_{\omega}\to T^-_X$ such that $h_+(L^s)=\mathcal L^s_X,\,h_-(L^u)=\mathcal L^u_X$. Thus, $\mathcal L^s_
X,\,\mathcal L^u_X$ are filling MS-laminations.
\end{ex}

Two filling MS-laminations $\mathcal L_1$ and $\mathcal L_2$ on a surface $S$ are called {\it strongly transverse} if they are transverse and if every connected component $C$ of $S\setminus(\mathcal L_1\cup\mathcal L_2)$ is a topological disc whose boundary consists of exactly four segments, which alternatively   
lie on leaves of $\mathcal L_1$ and  $\mathcal L_2$ (see Fig. \ref{str}).
\begin{figure}[htbp]
	\centering
	\includegraphics[width=0.7\linewidth]{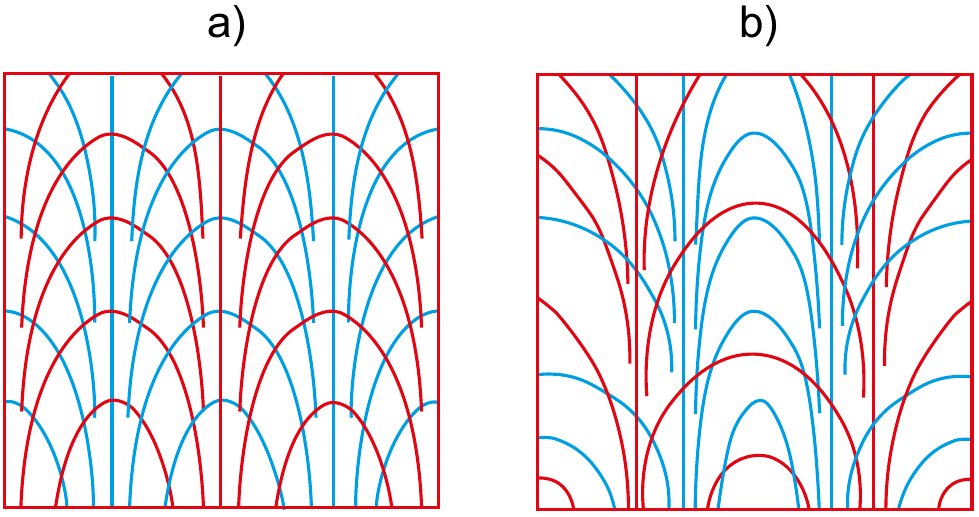}
	\caption{a) strongly transverse; b) transverse, but not strongly transverse filling MS-laminations}
	\label{str}
\end{figure}

\begin{statement}[\cite{BBY}, Proposition 1.3]\label{W} In Proposition \ref{glu}, assume furthermore that the plugs $(V,X)$ and $(U,Y)$  have filling MS-laminations and that $\varphi(\mathcal L^u_
X)$ is strongly transverse to $\mathcal L^s_Y$. Then the plug $(W,Z)$ has filling MS-laminations.
\end{statement}

Let $(V,X_i)$, $i\in\{0,1\}$ be hyperbolic plugs with filling MS-laminations and  strongly transverse gluing diffeomorphisms $\varphi_i: T^-_{X_i}\to T^+_{X_i}$. Triples $(V,X_0,\varphi_0)$, $(V,X_1,\varphi_1)$ is called {\it strongly isotopic} if there is a continuous path $(V,X_t,\varphi_t),\,t\in[0,1]$ of hyperbolic plugs with filling MS-laminations and strongly transverse gluing diffeomorphisms. Notice that this implies that $(V,X_0)$, $(V,X_1)$ are topologically equivalent.

\begin{statement}[\cite{BBY}, Theorem 1.5]\label{mai} Let $(V,X)$ be a hyperbolic plug with filling MS-laminations such
that the maximal invariant set of $X$ contains neither attractors nor repellers, and let $\varphi: T^-_{X}\to T^+_{X}$ be a strongly transverse gluing diffeomorphism. Then there exist a
hyperbolic plug $(V,Y)$ with filling MS-laminations and a strongly transverse gluing diffeomorphism $\psi: T^-_{Y}\to T^+_{Y}$ such that $(V,X,\varphi)$  and $(V,Y,\psi)$ are strongly
isotopic, and such that the vector field $Z$ induced by $Y$ on $V/\psi$ is Anosov.
\end{statement}

To decide whether the Anosov vector field $Z$, generated a triple $(V,X,\varphi)$ in Statement \ref{mai} is transitive or not, let us consider the oriented graph $\Upsilon$ defined as follows:
\begin{itemize}
\item the vertices of $\Upsilon$ are the basic pieces $\Lambda_1,\dots,\Lambda_k$ of $X$;
\item  there is an edge going from $\Lambda_i$ to $\Lambda_j$ if $W^u_{\Lambda_i}$ intersects $W^s_{\Lambda_j}$ or $\varphi(W^u_{\Lambda_i})$ intersects $W^s_{\Lambda_j}$.
\end{itemize} 
We say that $(V,X,\varphi)$ is {\it combinatorially transitive} if any
two edges of $\Upsilon$ can be joined by an oriented path.

\begin{statement}[\cite{BBY}, Proposition 1.6]\label{tra} Under the hypotheses of Statement \ref{mai}, if $(V,X,\varphi)$ is combinatorially
transitive, then the Anosov vector field $Z$ is transitive.
\end{statement}

\section{Anosov flow for the hyperbolic plug $(V,X)$ of Example \ref{est}}
Let us illustrate an idea of Statement \ref{mai} proof for the hyperbolic plug $(V,X)$ of Example \ref{est}. 

As $F_{1,0,1}$ is a Morse-Smale diffeomorphism on a surface, then it has a compatible system of neighborhoods (see, for example, \cite{Pa}, \cite{PS}). 
We present a modification of this notion  for arbitrary MS-diffeomorphism $f$ on a closed surface following to \cite[Theorem 2]{BGLP}. 

For $\nu\in\{-1,+1\}$, let $a_{\nu}:\mathbb R^2\to\mathbb R^2$ denote the diffeomorphism defined by the formula   $$a_{\nu}(x,y)=\left(\nu\cdot\frac{x}{2},\nu\cdot 2 y\right).$$ The diffeomorphism $a_{\nu}:\mathbb R^2\to\mathbb R^2$ has a unique fixed saddle point at the origin $O$ with the stable manifold $W^s_O=Ox_1$ and the unstable manifold $W^u_O=Ox_{2}$. 

Let $\mathcal N=\{(x_1,x_2)\in\mathbb{R}^2~:~ |x_1x_2|< 1\}$. We define a pair of transversal foliations $\mathcal{F}^u,~\mathcal{F}^s$ in the neighborhood $\mathcal N$ as follows:   
$$\mathcal{F}^u=\bigcup\limits_{c_{2}\in Ox_2}\{(x_1,x_2)\in \mathcal N~:~x_{2}=c_{2}\},$$ 
$$\mathcal{F}^s=\bigcup\limits_{c_1\in Ox_1}\{(x_1,x_2)\in \mathcal N~:~x_1=c_1\}.$$

Notice that the set $\mathcal N$ is invariant under the canonical diffeomorphism $a_{\nu}$, and $a_{\nu}$ maps the leaves of the foliation $\mathcal{F}^u$ ($\mathcal{F}^s$) into the leaves of the same foliation.

Let $\sigma$ be a saddle periodic point of the diffeomorphism $f$. Denote by $m_\sigma$ the period of the saddle point $\sigma$. Recall that the {\it orientation type} of the saddle point $\sigma$ is the number $\nu_\sigma$ equal to $-1\,(+1)$ if the diffeomorphism $f^{m_\sigma}|_{W^u_\sigma}$ reverses (preserves) orientation.

A neighborhood $N_\sigma$ of the saddle point $\sigma$ is called {\it linearizing} if there exists a homeomorphism $h_\sigma:N_\sigma\to {\mathcal N}$ conjugating the diffeomorphism $f^{m_{\sigma}}\vert_{{N}_f}$ with the canonical diffeomorphism $a_{\nu_\sigma}|_{\mathcal N}$.  

The foliations $\mathcal{F}^u,\,\mathcal{F}^s$ induce, via the homeomorphism $h_\sigma^{-1}$, $f^{m_{\sigma}}$-invariant foliations ${F}^u_\sigma,\,{F}^s_\sigma$ on the linearizing neighborhood $N_\sigma$ (see Fig. \ref{ris:linear_0}). For any point $x\in N_\sigma$, let $F^u_{\sigma,x},\,F^s_{\sigma,x}$ denote the leaves of the foliations ${F}^u_\sigma,\,{F}^s_\sigma$ passing through the point $x$.
\begin{figure}[htbp]
\center{\includegraphics[width=0.8\linewidth]{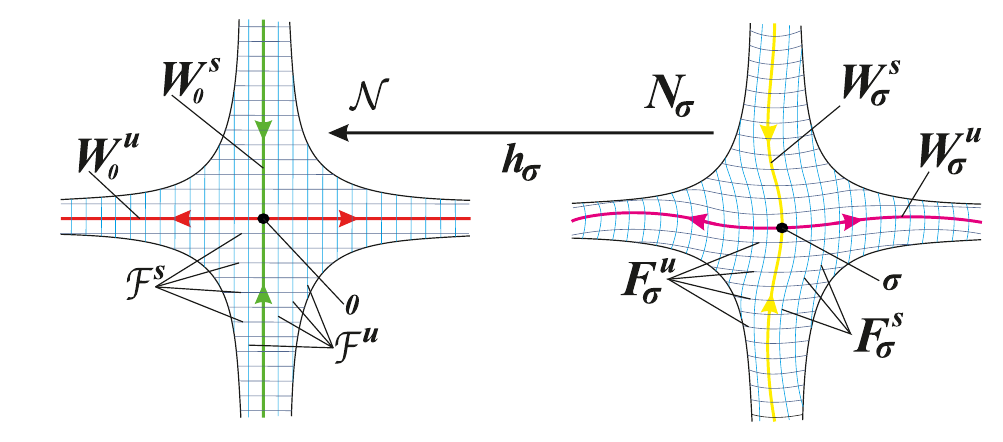}}\caption{Linearizing neighborhood of a saddle point $\sigma$}
\label{ris:linear_0}
\end{figure}

The neighborhood $N_{\mathcal O_\sigma}=\bigcup\limits_{k=0}^{m_{\sigma}-1}f^k(N_\sigma)$, equipped with the map $h_{\mathcal O_\sigma}$ composed of homeomorphisms $h_\sigma f^{-k}:f^k(N_\sigma)\to \mathcal N,~k=0,\dots,m_{\sigma}-1$, will be called the {\it linearizing neighborhood of the orbit} $\mathcal O_\sigma$. 

A set ${N}_f$ of $f$-invariant linearizing neighborhoods ${N}_\sigma$ of all saddle points $\sigma$ of the diffeomorphism $f$ is called a {\it compatible system of neighborhoods} if the following properties hold (see Fig. \ref{ris:neib2}):
\begin{figure}[htbp]
\center{\includegraphics[width=0.8\linewidth]{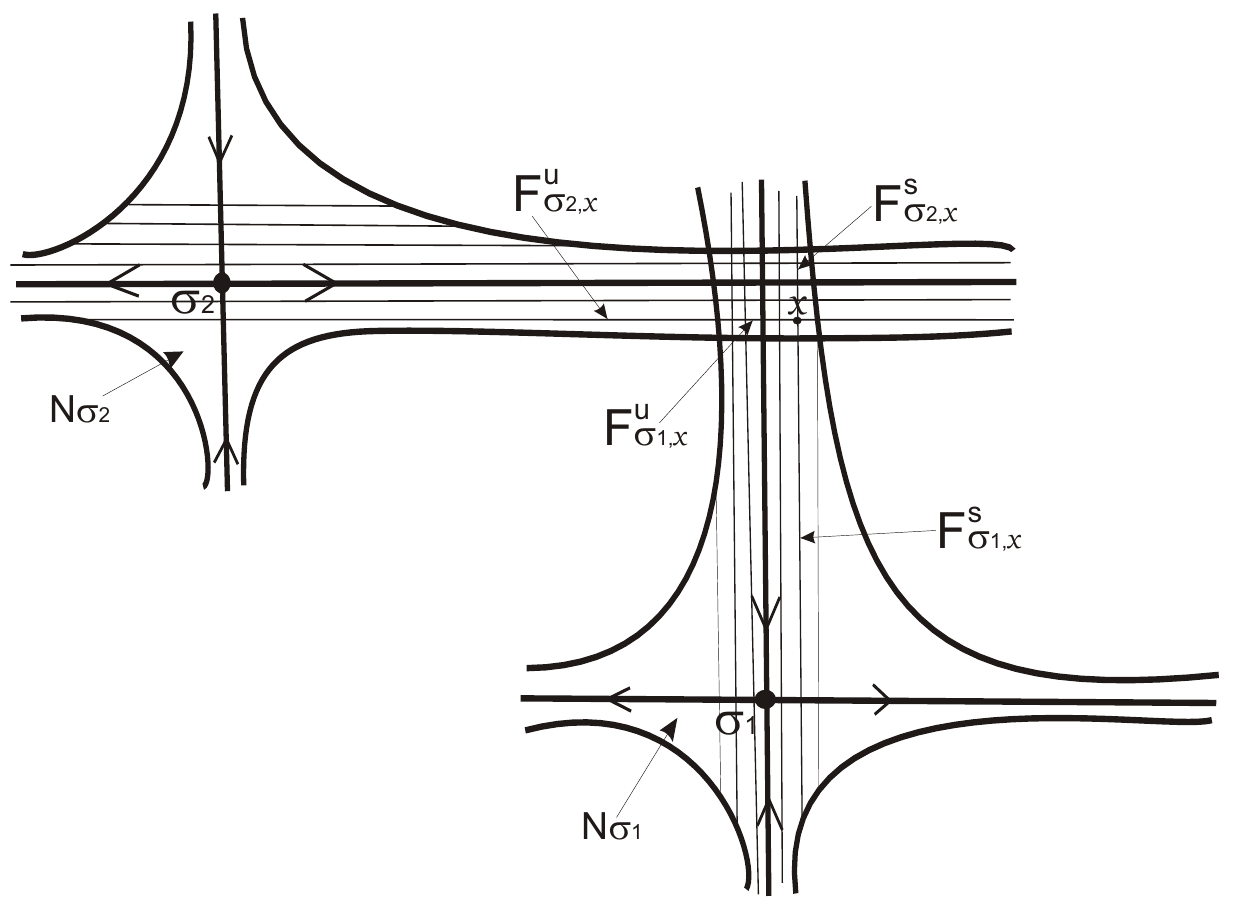}}
\caption{Compatible foliations}
\label{ris:neib2}
\end{figure}
\begin{itemize}
\item If $W^{s}_{{\sigma_1}}\cap W^{u}_{{\sigma_2}}=\varnothing$ and $W^{u}_{{\sigma_1}}\cap W^{s}_{{\sigma_2}}=\varnothing$ for saddle points $\sigma_1,\sigma_2$, then ${{N}}_{{\sigma_1}}\cap{{N}}_{{\sigma_2}}=\varnothing$;
\item If $W^s_{\sigma_1}\cap W^u_{\sigma_2}\neq\varnothing$ for saddle points $\sigma_1,\sigma_2$, then $$(F^u_{\sigma_1,x}\cap{N}_{\sigma_{2}})\subset F^u_{\sigma_2,x},\,\,\,(F^s_{\sigma_2,x}\cap N_{\sigma_{1}})\subset F^s_{\sigma_1,x},$$
for $x\in({N}_{{\sigma_1}}\cap {N}_{{\sigma_2}})$.
\end{itemize}

Now let $U(\Lambda)\subset V,\,F^s_X,\,F^u_X$ are suspensions under $F_{1,0,1}$ compatible system of neighborhoods and the compatible stable, unstable foliations. Let $U^+=U(\Lambda)\cap T^+_X$, $U^-=U(\Lambda)\cap T^-_X$ and $\mathcal W^s_+=F^s_X\cap T^+_X$, $\mathcal W^u_+=F^u_X\cap T^+_X$, $\mathcal W^s_-=F^s_X\cap T^-_X$, $\mathcal W^u_-=F^u_X\cap T^-_X$. By the construction  $\mathcal L^s_X\subset\mathcal W^s_+$, $\mathcal L^u_X\subset\mathcal W^u_-$. Notice that $(V,X)$ possesses a strongly transverse gluing diffeomorphism $\varphi$ (see Fig. \ref{ph}). For such simple plug $(V,X)$ we do not need to change $X$, it is sufficient to modify $\varphi$.
\begin{figure}[htbp]
\center{\includegraphics[width=0.8\linewidth]{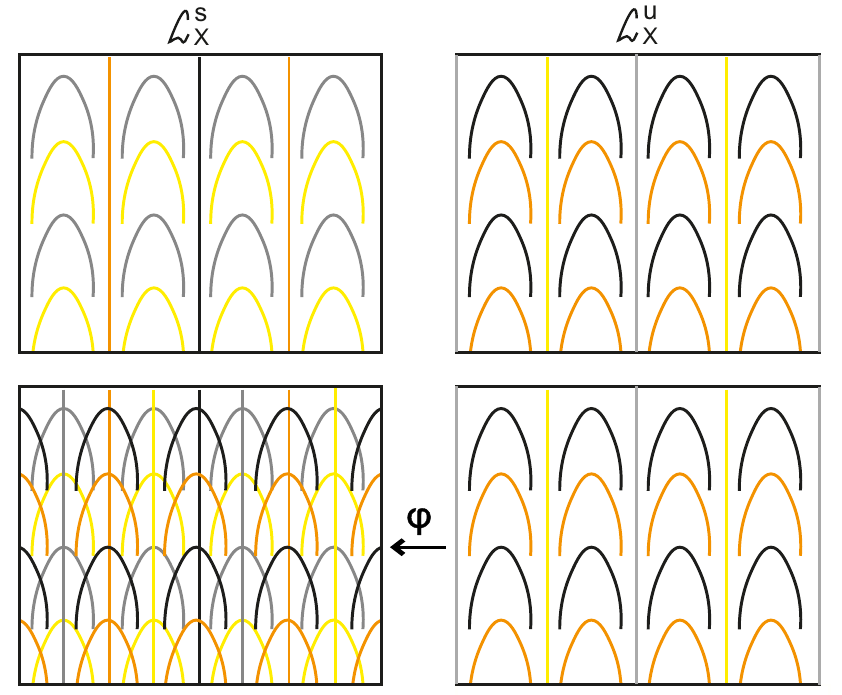}}\caption{Strongly transverse gluing diffeomorphism $\varphi$}
\label{ph}
\end{figure}
\begin{figure}[htbp]
\center{\includegraphics[width=0.8\linewidth]{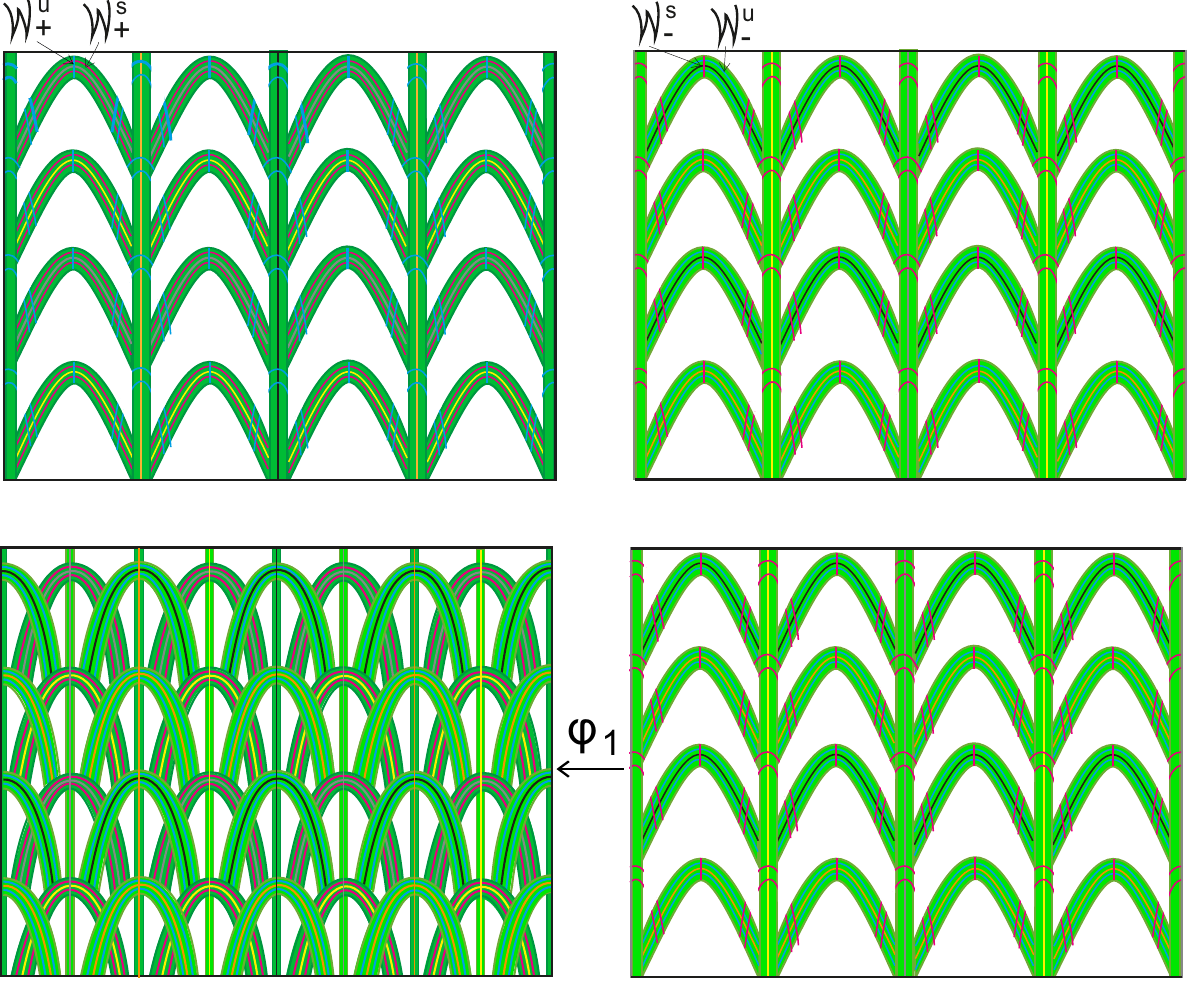}}\caption{Strongly transverse gluing diffeomorphism $\varphi_1$}
\label{ph1}
\end{figure}

We perturb $\varphi$ to $\varphi_1$ such that (see Fig. \ref{ph1}) 
the following properties hold:
\begin{itemize}
\item[1)] the laminations $\varphi_1(\mathcal L^u_X)$ and $\mathcal L^s_X$ are strongly transverse;
\item[2)] the foliations $\varphi_1(\mathcal W^s_-)$ and $\varphi_1(\mathcal W^u_-)$  coincide with $\mathcal W^s_+$ and $\mathcal W^u_+$, respectively,
on $\varphi_1(U^-)\cap U^+$;
\item[3)] $\varphi_1$ is isotopic to $\varphi$ among strongly transverse gluing maps.
\end{itemize}

As the filling lamination $\varphi_1(\mathcal L^u_X)$ is strongly transverse to $\mathcal L^s_X$ then we can extend $\mathcal W^s_+\cup\varphi_1(\mathcal W^s_-),\mathcal W^u_+\cup\varphi_1(\mathcal W^u_-)$ up to a filing MS-foliations $\mathcal G^s_+,\mathcal G^u_+$ on $T^+_X$. Let $\Gamma:T^+_X\setminus\mathcal L^s_X\to T^-_X\setminus\mathcal L^u_X$ be the {\it crossing map}, defined by the flowlines of $X$. Then $\mathcal G^s_-=\mathcal W^s_-\cup\Gamma(\mathcal G^s_+),\mathcal G^u_-=\mathcal W^u_-\cup\Gamma(\mathcal G^u_+)$ are filing MS-foliations  on $T^-_X$. We perturb $\varphi_1$ to $\varphi_2$ such that $\varphi_2(\mathcal G^s_-)=\mathcal G^s_+,\varphi_2(\mathcal G^u_-)=\mathcal G^u_+$ and $\varphi_2|_{U^-}=\varphi_1|_{U^-}$ (see Fig. \ref{ph2}). Moreover, we get $X$-invariant transversal foliations $\mathcal G^s,\mathcal G^u$ on $V$, coinciding with $F^s,F^u$ on $U(\Lambda)$ and such that $\mathcal G^s_\pm=\mathcal G^s\cap T^\pm_X,\mathcal G^u_\pm=\mathcal G^u\cap T^\pm_X$.  
\begin{figure}[htbp]
\center{\includegraphics[width=\linewidth]{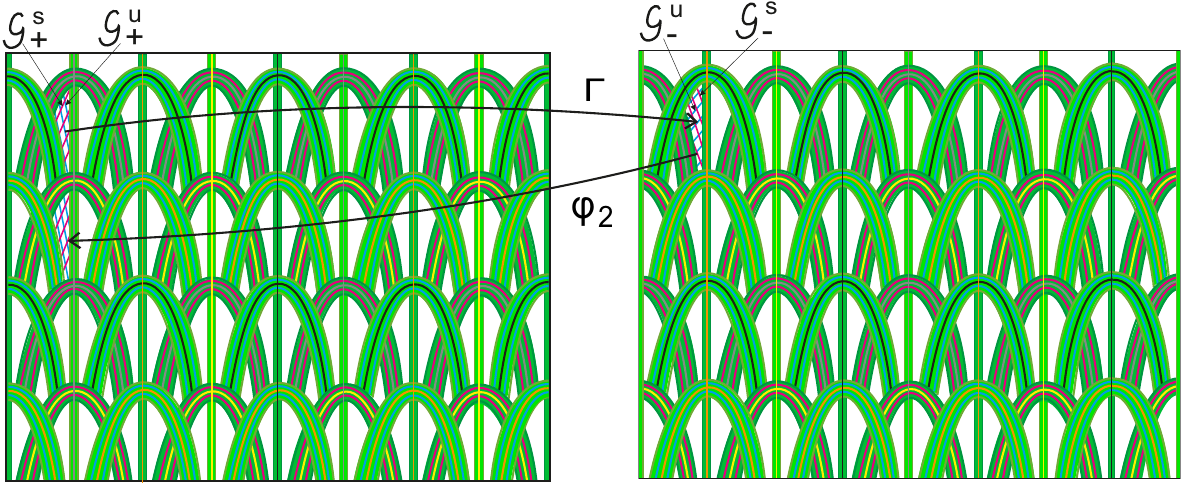}}\caption{Strongly transverse gluing diffeomorphism $\varphi_2$}
\label{ph2}
\end{figure}

Last step to pertube $\varphi_2$ to $\psi$ such that the return map $\theta=\psi\Gamma$ of the orbits of $X$ on $T^+_X$ is hyperbolic. As for a point $x\in T^+_X\setminus\mathcal L^s_X$ 
which is very close to the lamination $\mathcal L^s_X$, its forward orbit will spend a very long time near $\Lambda$, the derivative of $\Gamma$ at $x$ will
expand a vector tangent to $\mathcal G^u$
in by a very large factor. Therefore, 
the derivative of $\psi\Gamma$ expands vectors tangent to $\mathcal G^u$
in by a factor larger than $\lambda$, no matter what $\psi$ might be. Similarly for a point $x\in T^-_X\setminus\mathcal L^u_X$. Then we take $\psi=\psi_+\varphi_2\psi_-$, where $\psi_+:T^+_X\to T^+_X$ coincides with the identity map on $U^+$ and such that $\psi_+\varphi_2\Gamma$ expands vectors tangent to $\mathcal G^u_+$
in by a factor larger than $\lambda$,  $\psi_-:T^-_X\to T^-_X$ coincides with the identity map on $U^-$ and such that $(\Gamma\varphi_2\psi_-)^{-1}$ expands vectors tangent to $\mathcal G^s_-$
in by a factor larger than $\lambda$. 

\section{Countable many graph manifolds, admitting $n$ pairwise inequivalent 
transitive Anosov flows}
In this section we proof Theorem \ref{tigr}. Namely,  for every $n\in\mathbb N$ we construct countable many graph manifolds $M_{k,n}$, $k \in \mathbb{N}$ 
with a single geometric piece, each of which admits $n$ pairwise inequivalent 
transitive Anosov flows.

\begin{proof}
We choose a prime number $p > 2n$ and $q \in \{0, 1, \dots, n\}$. For each 
such pair $(p,q)$, we construct an isotopic to identity gradient-like 
diffeomorphism $F_{p,q} : \mathbb{T}^2 \to \mathbb{T}^2$, its phase 
portrait is shown in Figure \ref{1} for $(p,0)$ and $(p,1)$.
\begin{figure}[htbp]
	\centering
	\includegraphics[width=\linewidth]{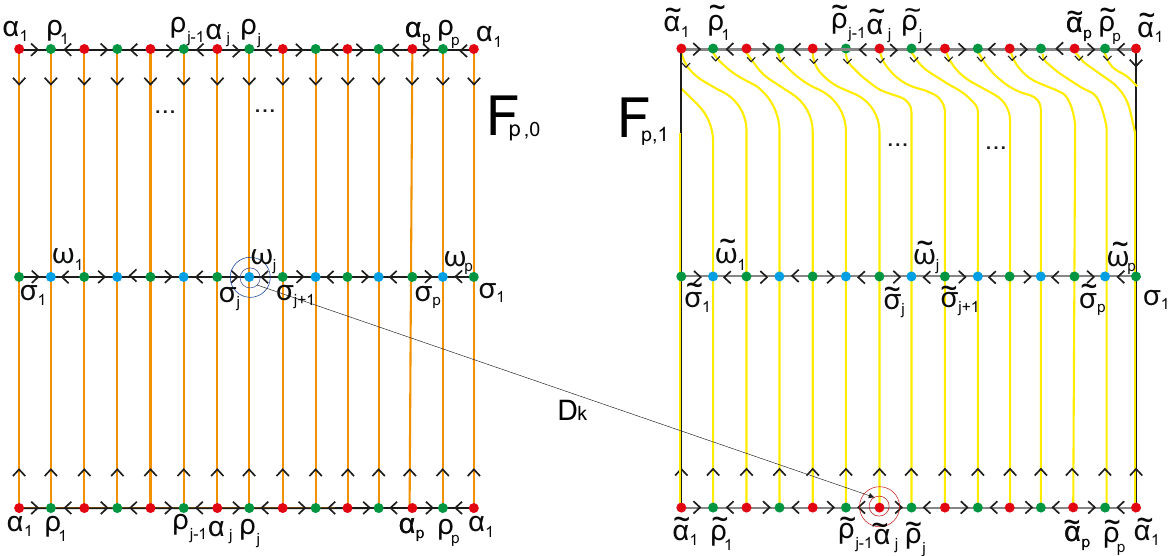}
	\caption{Diffeomorphisms $F_{7,0},\,F_{7,1}$}
	\label{1}
\end{figure}
Diffeomorphism $F_{p,0}$ is the direct product of sink-source map  with $p$-sink-$p$-source map of the circles. So, the non-wandering set of the diffeomorphism $F_{p,0}$ consists of $p$ 
sinks $\omega_1,\dots,\omega_p$, $p$ sources $\alpha_1,\dots,\alpha_p$, and $2p$ saddles $\sigma_1,\dots,\sigma_p,\rho_1,\dots,\rho_p$.

The non-wandering set of the diffeomorphism $F_{p,q},\,q>0$ consists of $p$ 
sinks $\tilde\omega_1,\dots,\tilde\omega_p$, $p$ sources $\tilde\alpha_1,\dots,\tilde\alpha_p$, and $2p$ saddles $\tilde\sigma_1,\dots,\tilde\sigma_p,\tilde\rho_1,\dots,\tilde\rho_p$. Herewith for every $j\in\{1,\dots,p\}$ $${\rm cl}\,W^u_{\tilde\sigma_j}=\tilde\omega_{j-1}\cup W^u_{\tilde\sigma_j}\cup\tilde\omega_j,\,\,{\rm cl}\,W^s_{\tilde\sigma_j}=\tilde\alpha_{j-q}\cup W^u_{\tilde\alpha_j}\cup\tilde\alpha_j,$$ 
$${\rm cl}\,W^u_{\tilde\rho_j}=\tilde\omega_{j}\cup W^u_{\tilde\sigma_j}\cup\tilde\omega_{j+q},\,\,{\rm cl}\,W^s_{\tilde\sigma_j}=\tilde\alpha_{j-1}\cup W^u_{\tilde\alpha_j}\cup\tilde\alpha_j.$$ 
Thus the closures of the invariant 
manifolds of the saddle points form two circles with the homotopy 
type $\langle 1,0\rangle$, as well as two circles with the homotopy 
type $\langle -q,p\rangle$. It implies that the diffeomorphisms $F_{p_1,q_1}$ and $F_{p_2,q_2}$ are 
topologically conjugate if and only if $p_1=p_2$ and $q_1=q_2$.

Let $k\in\mathbb N$ and let $F_{p,q,k}$ be the connected sum of the 
diffeomorphisms $F_{p,0}$ and $F_{p,q}$ obtained by pairwise 
identifying  sinks $\omega_j,\,j\in\{1,\dots,p\}$ of the diffeomorphism $F_{p,0}$ 
with sources $\tilde\alpha_j$ of the diffeomorphism $F_{p,q}$ 
using the $k$-th Dehn twist $D_k$ on the corresponding orbit spaces 
(see Figure \ref{2}).
\begin{figure}[htbp]
	\centering
	\includegraphics[width=\linewidth]{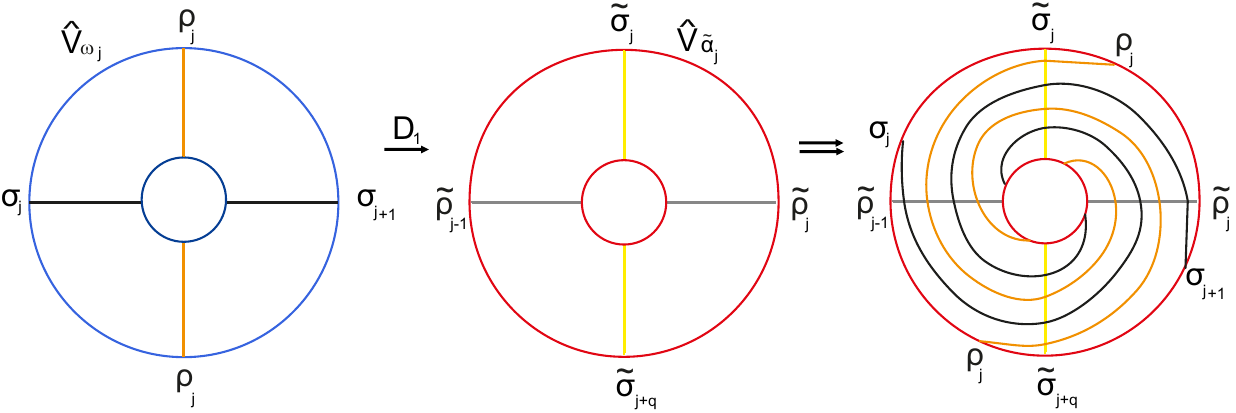}
	\caption{Dehn twist}
	\label{2}
\end{figure}

Thus, the diffeomorphism $F_{p,q,k}$ is a Morse-Smale diffeomorphism on an 
orientable surface of genus $p+1$. Its non-wandering set consists of $p$ 
sinks $\tilde\omega_1,\dots,\tilde\omega_p$, $p$ sources $\alpha_1,\dots,\alpha_p$, and $4p$ saddles $\sigma_1,\dots,\sigma_p,\rho_1,\dots,\rho_p,\tilde\sigma_1,\dots,\tilde\sigma_p,\tilde\rho_1,\dots,\tilde\rho_p$, whose invariant manifolds intersect 
along $4pk$ heteroclinic orbits. The hyperbolic plug $(V,X)$ is obtained from the suspension over the 
diffeomorphism $F_{p,q,k}$ by removing the neighborhoods of the sink 
and source periodic orbits. We note that the topology of $V$ depends 
on $p$ and $k$, but does not depend on $q$. Thus, one can construct $n$ 
pairwise non-equivalent flows on the same plug. The plug $V$ has $p$ 
incoming and $p$ outgoing components. The laminations $\mathcal L^s_X,\,\mathcal L^u_X$ 
on each  component of $T^{+}_X,T^{-}_X$ are homeomorphic the laminations from stable, unstable saddle manifolds $L^s\subset \hat V_{\alpha_j},\,L^u\subset\hat V_{\tilde\omega_j}$, accordingly (see Figure \ref{3}).
\begin{figure}[htbp]
	\centering
	\includegraphics[width=\linewidth]{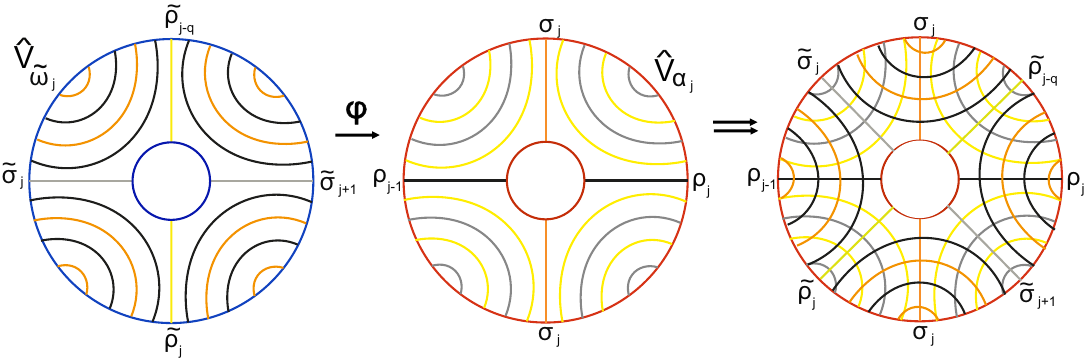}
	\caption{The gluing map $\varphi$}
	\label{3}
\end{figure}

Consequently, the boundaries of the plug can be glued pairwise (preserving 
the indices) by $\varphi$ (see Figure \ref{3}) such that the resulting flow becomes a transitive Anosov flow by Statement \ref{mai}. The obtained manifold $M_{k,n}$ is a graph manifold with a JSJ-decomposition consisting 
of $p$ tori and a single Seifert piece. Let us show that the Anosov flows for different $q$ on the same $M_{k,n}$ are inequivalent. 

Note, that for the manifold $M_{k,n}$ every JSJ-torus $T_j$ arises at the place, where the basins $V_{\omega_j},\,V_{\tilde\alpha_j}$ are glued together. It follows from the Statement \ref{br} that up to a homeomorphism the traces in $T_j$ of the invariant manifolds of the saddle orbits of the set $\Lambda_X$ are an invariant of the topological equivalence of the flow. To understand how the traces on $T_j$ are arranged, we will change the order in which the underlying manifold $M_{k,n}$ was glued. That is, first we glue by $\varphi$ the tori corresponding to the basins $V_{\omega_j},\,V_{\tilde\alpha_j}$, and then the tori corresponding to the basins $V_{\omega_j},\,V_{\tilde\alpha_j}$, using Dehn's rotation. After gluing by $\varphi$, we will see on each torus $\tilde T_j$ a lamination  consisting of 8 knots, 4 of which are traces of unstable manifolds, and the remaining 4 are traces of stable manifolds  of saddle orbits (see Fig. \ref{4}). 
\begin{figure}[htbp]
	\centering
	\includegraphics[width=\linewidth]{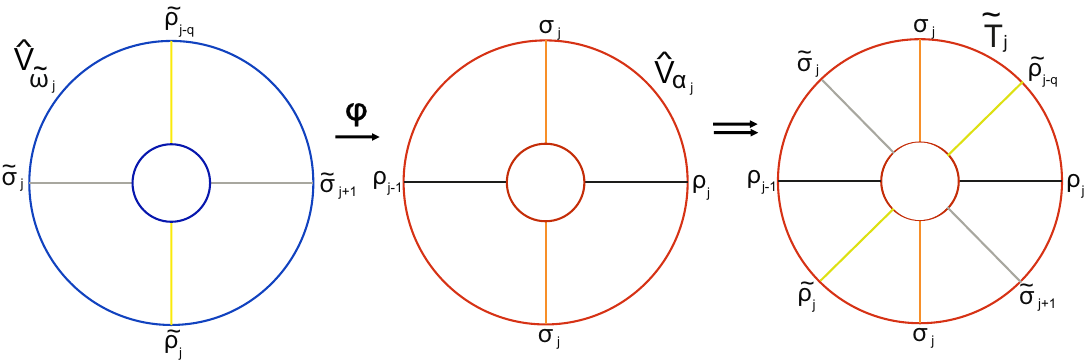}
	\caption{Lamination on  $\tilde T_j$}
	\label{4}
\end{figure}

The traces of the unstable orbits are carried by the flow into the neighborhood  of the attracting periodic orbits, while the traces of the stable orbits are carried into the neighborhood of the repelling ones. Thus, at the boundaries of the attracting and repelling periodic orbits, the traces form laminations, as shown in the Figure \ref{5}, that gives the resultin lamination on $T_j$.
\begin{figure}[htbp]
	\centering
	\includegraphics[width=\linewidth]{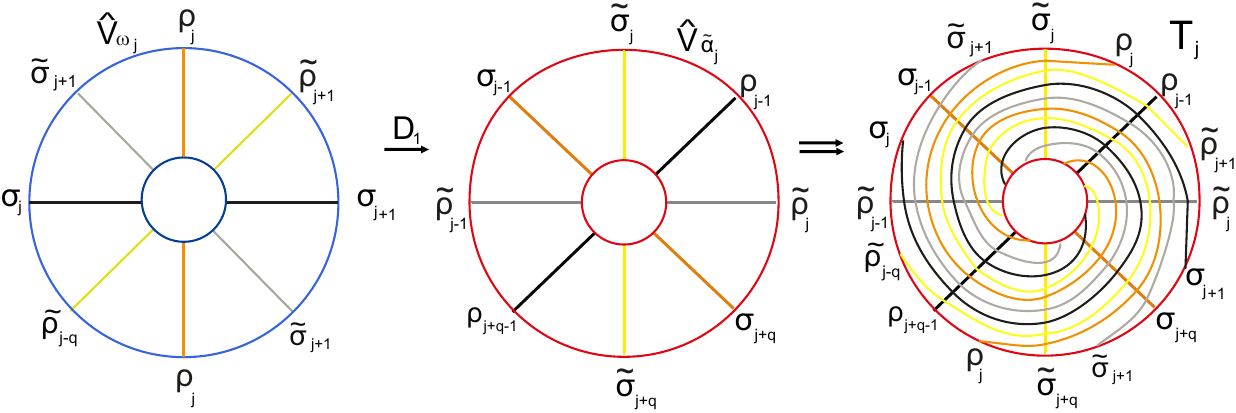}
	\caption{Lamination on $T_j$}
	\label{5}
\end{figure}
The presence on $T_j$ of traces of invariant manifolds of the periodic orbits of saddles $\tilde\sigma_j,\tilde\sigma_{j+1}$ indicates that the cyclic order of the numbers of periodic orbits of saddles $\tilde\sigma_1,\dots,\tilde\sigma_{p}$ is an invariant of the constructed Anosov flow. On the other hand, the presence in $T_j$ of a trace from the invariant manifold of the periodic orbit of $\tilde\sigma_{j+q}$ indicates that the number $q$ is also an invariant of the flow.
\end{proof}


\begin{thebibliography}{99}
\bibitem{BaJ} T. Barbot. Mise en position optimale de tores par rapport a un flot d'Anosov // Comment. Math. Helv. 1995. V. 70. P. 113--160.

\bibitem{Barb2} T. Barbot. Flots d'Anosov sur les vari'et'es graph'ees au sens de Waldhausen // Ann. Inst. Fourier
Grenoble. 1996. V. 46. P. 1451--1517.

\bibitem{BaFe3} T. Barbot, S. Fenley.  Orbital equivalence classes of finite coverings of geodesic flows // 
arXiv:2205.02495.

\bibitem{BBY} F. Beguin, C. Bonatti, B. Yu.  Building Anosov flows on 3-manifolds // Geom. Topol. 2017. V. 21. No. 3. P. 1837--1930.


\bibitem{BGLP} Bonatti C., Grines V., Laudenbach F., Pochinka O. Topological classification of Morse-Smale diffeomorphisms without heteroclinic curves on 3-manifolds // Ergodic Theory and Dynamical Systems. 2019. V. 39. No. 9. P. 2403--2432.

\bibitem{Br} M. Brunella. Separating the basic sets of a non-transitive Anosov flow // Bull. London Math. Soc. 1993. V. 25. No. 5. P. 487--490.

\bibitem{Ca} D. Calegari. Foliations and the geometry of 3-manifolds // Oxford Mathematical Monographs, Oxford
University Press. Oxford. 2007.

\bibitem{CC} A. Candel, L. Conlon.  Foliations I and II // Providence, Rhode Island: American Mathematical Society.  2000.  394 p. (Graduate Studies in Mathematics. V. 23) and  Providence, Rhode Island: American Mathematical Society.  2003. 545 p. (Graduate Studies in Mathematics. V. 60).

\bibitem{ClPi} A. Clay, T. Pinsky.  Graph manifolds that admit arbitrarily many Anosov flows // arXiv:2006.09101

\bibitem{Gh} E. Ghys. Flots d'Anosov sur les 3-vari'et'es fibr'ees en cercles.  Ergodic Theory Dynam. Systems. 1984. V. 4. No. 1. P. 67--80.

\bibitem{Goo} S. Goodman.  Dehn surgery on Anosov flows //  Geometric dynamics (Rio de Janeiro, 1981), Lecture
Notes in Math. 1983. V. 1007, P. 300--307.

\bibitem{KH} A. Katok, B. Hasselblatt, Introduction to the modern theory of dynamical systems, Cambridge
University Press. 1995.

\bibitem{Marg} G. Margulis, Y-flows on three dimensional manifolds // Appendix to Certain smooth ergodic systems. Uspehi Mat. Nauk. 1967. V. 22. No. 5. P. 107--172.

\bibitem{Pa} Palis J. On Morse-Smale dynamical systems //Topology.  1969.  V. 8.  No. 4.  P. 385--404.
	
	\bibitem{PS}  Palis J., Smale S. Structural stability theorems // The Collected Papers of Stephen Smale. 2000. V. 2. – P. 739--747.

\bibitem{Pla81} J. F. Plante. Anosov flows, transversely affine foliations and a conjecture of Verjovsky // J. London Math.
Soc. 1981. V. 23. No. 2. P. 359--362.

\bibitem{PT} J. Plante, W. Thurston. Anosov flows and the fundamental group // Topology. 1972. V. 11. P. 147--150.
\end{thebibliography}
\end{document}